\theoremstyle{plain}
 \newtheorem{theorem}{Theorem}[section]
 \newtheorem{proposition}{Proposition}[section]
 \newtheorem{corollary}{Corollary}[section]
\theoremstyle{definition}
\theoremstyle{remark}
 \newtheorem{definition}{definition}
 \numberwithin{equation}{section}
\renewcommand{\leq}{\leqslant}
\renewcommand{\geq}{\geqslant}
\title[ Generating function/ M. Goubi]{On a closed form of rational generating functions for polynomials}
\subjclass[2010]{Primary 11B39; Secondary 05A15, 11B83}
\keywords{Generating function, Cauchy product of series, general
recursive formula.}
\author{\bfseries Mouloud  Goubi} %% Please write ful names, avoid initials
\address{Mouloud Goubi\\
Department of Mathematics \\
University of UMMTO RP. 15000\\
Tizi-ouzou, Algeria\\
Laboratoire d'Alg\`ebre et Th\'eorie des Nombres, USTHB Alger}
\email{mouloud.goubi@ummto.dz}
\begin{document}

\vspace{18mm} \setcounter{page}{1} \thispagestyle{empty}

\begin{abstract}
Our goal in this work is to found a closed form for rational
generating functions, these generate a various families of
polynomials and generalized polynomials, in order to get the general
recursive formula satisfied by these polynomials.
\end{abstract}

\maketitle

\section{Introduction}
\label{Sec:1} First we remember the notion of ordinary generating
function \cite{1} of a family of polynomials in one variable.
\begin{definition}
\label{def1} $f\left(x,t\right)$ is an ordinary generating function
if and only if there exist a sequence $P_k(x)$ of polynomials in
$\mathbb{Z}[x]$ and $\delta>0$ a positive real number such that
\begin{equation}
\label{eqdef1} f\left(x,t\right)=\sum_{k\geq0}P_k\left(x\right)t^k,\
~|t|<\delta.
\end{equation}
\end{definition}
In this paper we consider a family of generating functions for these
the polynomials associated obey to the same general recursive
formula. Let the family $\left\{A_0(x), A_1(x), A_2(x),\cdots,
A_m(x),B_0(x)\neq0,B_1(x), B_2(x),\cdots, B_n(x)\right\}$ of
polynomials such that $B_0(x), B_1(x),\cdots, B_n(x)$ are coprime
and the rational function $f(x,t)$ defined by $f(x,t)=A(x,t)/B(x,t)$
where $A(x,t)=\displaystyle\sum_{j=0}^{m}A_j\left(x\right)t^j$ and
$B(x)=\displaystyle\sum_{l=0}^{n}B_l\left(x\right)t^l.$\\
Taking $h(x,t)=-\sum_{l=1}^{n}{B_l(x)/B_0(x)}t^l$ then
$h\left(x,0\right)=0$. Since $h\left(x,t\right)$ as a function of
$t$ is continuous on $\mathbb{R}$ then there exist a constant
$\delta>0$ such that $|h\left(x,t\right)|<1$ for $|t|<\delta$.
Furthermore
\[\frac{1}{1-h\left(x,t\right)}=\sum_{k\geq0}h^{k}\left(x,t\right),\
|t|<\delta\] this result is deduced from the well known identity
\[\frac{1}{1-t}=\sum_{k\geq0}t^k,\ |t|<1.\]
Without lost generality $h^{k}\left(x,t\right)$ can be written in
the following form
\[h^{k}\left(x,t\right)=\displaystyle\sum_{j_1+j_2+\cdots+j_n=k}{k\choose j_1\cdots j_n}\frac{B^{j_1}_1(x)\cdots B^{j_n}_n(x)}{B^k_0(x)}t^{j_1+2j_2\cdots+nj_{n}}\]
where ${k\choose j_1\cdots j_n}$ is the multinomial of order $n$
\[{k\choose j_1\cdots j_n}=\frac{k!}{j_1!j_2!\cdots j_n!}.\]
Finally
\[f(x,t)=A(x,t)\sum_{k\geq0}\displaystyle\sum_{j_1+j_2+\cdots+j_n=k}
{k\choose j_1\cdots j_n}\frac{B^{j_1}_1(x)\cdots
B^{j_n}_n(x)}{B^{k+1}_0(x)}t^{j_1+2j_2\cdots+nj_{n}}\] to be a
generating function; $B^{k+1}_0(x)$ must divides $A_l(x)$ for every
$k\geq0$ and $0\leq l\leq m$ then $B_0(x)$ must be $1$. We conclude
that all rational generating functions are of the form
\[f(x,t)=\frac{\sum_{j=0}^{m}A_j(x)t^j}{\sum_{l=0}^{n}B_l(x)t^l}\]
with $B_0(x)=1.$
%%%%%%%%%%%%%%%%%%%%%%%%%%%%%%%%%%%%%%%%%%%%%%%%%%%%%%%%%%%%%%%%%%%%%%%%%%%%%%%%%%%%%%%%%%%%%%%%%%%%%%%%%%%%%%%%%%%
\section{Statement of main results}
\label{Sec:2} Let $f\left(x,t\right)$ the rational function of two
variables $x$ and $t$ considered bellow. Denoting $\chi_m$ the
characteristic function of the set $\left\{0,1,\cdots,m\right\}$. It
means that
\begin{eqnarray*}
\chi_m(k) = \left\{
\begin{array}{lll}
1\ ,&\quad \textrm{ if }\ 0\leq k\leq m, \\
0\ , &\quad  \textrm{otherwise}.
\end{array}
\right.
\end{eqnarray*}
The recursive formula of polynomials generated by
$f\left(x,t\right)$ is given in the following theorem.
\begin{theorem}
\label{th1} If $B_0(x)=1$ then $f(x,t)$ generates the family
$\left\{P_k(x),\ k\geq0\right\}$ of polynomials such that
$P_0(x)=A_0(x)$ and
\begin{equation}
\label{equath1}
P_{k}(x)=\chi_m(k)A_k(x)-\sum_{j=1}^{\min\left\{n,k\right\}}B_j(x)P_{k-j}(x),\
k\geq1
\end{equation}
\end{theorem}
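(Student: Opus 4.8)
The plan is to start from the identity $A(x,t)=B(x,t)\,f(x,t)$, which is simply the defining relation $f=A/B$ cleared of its denominator and which is valid on the disk $|t|<\delta$ where, by the discussion in the introduction, $f(x,t)$ admits the convergent power series expansion $f(x,t)=\sum_{k\geq0}P_k(x)t^k$. The strategy is then the standard one: expand both sides of $A=Bf$ as power series in $t$, and equate the coefficients of $t^k$. Uniqueness of the coefficients of a convergent power series is what licenses this comparison and ultimately yields the recursion.

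First I would read off the left-hand side. Since $A(x,t)=\sum_{j=0}^{m}A_j(x)t^j$ is a polynomial in $t$ of degree $m$, the coefficient of $t^k$ in $A(x,t)$ is exactly $\chi_m(k)A_k(x)$: it equals $A_k(x)$ when $0\leq k\leq m$ and vanishes otherwise, which is precisely what the characteristic function $\chi_m$ encodes. Next I would expand the right-hand side by the Cauchy product of the two series $B(x,t)=\sum_{l=0}^{n}B_l(x)t^l$ and $f(x,t)=\sum_{i\geq0}P_i(x)t^i$. Because $B$ has degree $n$ in $t$, only the indices $0\leq l\leq n$ with $l\leq k$ contribute, so the coefficient of $t^k$ is
\[
\sum_{l=0}^{\min\{n,k\}}B_l(x)\,P_{k-l}(x).
\]
Equating the two coefficients of $t^k$ gives
\[
\chi_m(k)A_k(x)=\sum_{l=0}^{\min\{n,k\}}B_l(x)\,P_{k-l}(x),
\]
and now the hypothesis $B_0(x)=1$ is used to split off the $l=0$ term as $P_k(x)$; transposing the remaining sum to the other side delivers formula \eqref{equath1}.

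Finally I would check the base case directly: for $k=0$ the sum $\sum_{l=1}^{\min\{n,0\}}$ is empty, and $\chi_m(0)=1$ since $0\leq m$, so the recursion collapses to $P_0(x)=A_0(x)$, as required. The computation itself is routine; the only point demanding genuine care — and the step I expect to be the main obstacle — is the justification of the term-by-term coefficient comparison, namely verifying that on $|t|<\delta$ the product $B(x,t)f(x,t)$ may legitimately be rearranged into a single power series via the Cauchy product (absolute convergence in the common disk of convergence), and then appealing to the uniqueness of power-series coefficients. One must also handle the edge ranges carefully, ensuring the $\chi_m(k)$ factor correctly kills the left side for $k>m$ and that the upper limit $\min\{n,k\}$ correctly truncates the convolution for $k<n$; these bookkeeping details, rather than any deep idea, are where the argument has to be stated precisely.
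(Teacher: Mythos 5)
Your proposal is correct and follows essentially the same route as the paper: clear the denominator to get $A(x,t)=B(x,t)f(x,t)$, expand the right-hand side by the Cauchy product of the polynomial $B$ with the series $\sum_{k\geq0}P_k(x)t^k$, identify coefficients of $t^k$ to obtain $\chi_m(k)A_k(x)=\sum_{j=0}^{\min\{n,k\}}B_j(x)P_{k-j}(x)$, and isolate the $j=0$ term using $B_0(x)=1$. Your additional remarks on uniqueness of power-series coefficients and the $k=0$ base case are sound refinements of the same argument.
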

\begin{corollary}\label{coro1}
\label{coro1}If $B_0(x)=1$ then
\[g(x,t)=\frac{1}{B\left(x,t\right)}\]
generates the family $\left\{Q_k(x),\ k\geq0\right\}$ of polynomials
such that $Q_0(x)=1$ and
\begin{equation}
\label{equcoro1}
Q_k(x)=-\sum_{j=1}^{\min\left\{n,k\right\}}B_j(x)Q_{k-j}(x),\ ~
k\geq1.
\end{equation}
\end{corollary}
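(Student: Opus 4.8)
The plan is to obtain Corollary~\ref{coro1} as the special case of Theorem~\ref{th1} in which the numerator is the constant polynomial $1$. Concretely, I would write $g(x,t)=1/B(x,t)=A(x,t)/B(x,t)$ with $A(x,t)=1$, so that $m=0$, $A_0(x)=1$, and $A_j(x)=0$ for all $j\geq1$. Since $B_0(x)=1$ by hypothesis, Theorem~\ref{th1} applies verbatim and produces a family $\{Q_k(x)\}_{k\geq0}$ with $Q_0(x)=A_0(x)=1$, which is the claimed initial value.

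For the recursion, the key step is to substitute $m=0$ into \eqref{equath1}. The characteristic function then becomes $\chi_0$, which equals $1$ only at $k=0$ and vanishes for every $k\geq1$. Hence, for $k\geq1$, the term $\chi_0(k)A_k(x)$ is identically zero (consistently with $A_k(x)=0$ for $k\geq1$), and \eqref{equath1} collapses to
\[
Q_k(x)=-\sum_{j=1}^{\min\{n,k\}}B_j(x)\,Q_{k-j}(x),
\]
which is exactly \eqref{equcoro1}. This establishes the corollary.

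If a self-contained derivation is preferred, the same identity follows directly by equating coefficients in the relation $g(x,t)B(x,t)=1$. Writing $g(x,t)=\sum_{k\geq0}Q_k(x)t^k$ and forming the Cauchy product with $B(x,t)=\sum_{l=0}^{n}B_l(x)t^l$, the coefficient of $t^k$ on the left is $\sum_{l=0}^{\min\{n,k\}}B_l(x)Q_{k-l}(x)$, while on the right it is $1$ for $k=0$ and $0$ for $k\geq1$. Isolating the $l=0$ term and using $B_0(x)=1$ yields $Q_0(x)=1$ together with the stated recursion for $k\geq1$.

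I expect no serious obstacle here, since the statement is a direct specialization. The only points that require care are the correct interpretation of the degenerate characteristic function $\chi_0$ at $k=0$ versus $k\geq1$, and the truncation of the summation index at $\min\{n,k\}$, which guarantees that only the finitely many coefficients $B_1(x),\dots,B_n(x)$ actually occur in the sum.
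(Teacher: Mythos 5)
Your proposal is correct and follows essentially the same route as the paper: the author likewise specializes Theorem~\ref{th1} by taking $A_0(x)=1$, $A_k(x)=0$ for $k\geq1$, hence $m=0$, and substitutes into \eqref{equath1}. Your additional remarks on $\chi_0$ and the self-contained Cauchy-product derivation are sound but not needed beyond the paper's argument.
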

\begin{proof}
Just taking $A_0\left(x\right)=1$ and $A_k(x)=0$ for $k\geq1$, then
$m=0$. After substitution in the formula \eqref{equath1} Theorem
\ref{th1} we get a new family of polynomial $Q_k(x)$ satisfying the
identity \eqref{equcoro1} Corollary \ref{coro1}.
\end{proof}

The family $\left\{P_k(x),\ k\geq0\right\}$ depend only on the
family $\left\{Q_k(x),\ k\geq0\right\}$, it results from the
convolution product of the two families $\left\{A_j(x),\ 0\leq j\leq
m\right\}$ and $\left\{Q_k(x),\ k\geq0\right\}$. Explicit formula is
given in the following proposition.
\begin{proposition}\label{prop1}
\begin{equation}
\label{equaprop1}P_k(x)=\sum_{j=0}^{\min\left\{m,k\right\}}A_j(x)Q_{k-j}(x).
\end{equation}
\end{proposition}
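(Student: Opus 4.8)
The plan is to exploit the factorization $f(x,t)=A(x,t)\,g(x,t)$ and then read off the coefficient of $t^k$ from a Cauchy product of the two series. Since $B_0(x)=1$, the expansion $g(x,t)=1/B(x,t)=\sum_{k\geq0}Q_k(x)t^k$ furnished by Corollary \ref{coro1} is well defined and convergent for $|t|<\delta$, so that
\[
f(x,t)=\frac{A(x,t)}{B(x,t)}=A(x,t)\cdot\frac{1}{B(x,t)}=A(x,t)\,g(x,t).
\]
The whole statement then follows by expanding this product as a power series in $t$ and comparing it with the defining expansion $f(x,t)=\sum_{k\geq0}P_k(x)t^k$.

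First I would write out the product explicitly. Because $A(x,t)=\sum_{j=0}^{m}A_j(x)t^j$ is a polynomial of degree $m$ in $t$, multiplying it by $g$ produces only finitely many shifted copies of the series $g$:
\[
A(x,t)\,g(x,t)=\sum_{j=0}^{m}A_j(x)\,t^j\sum_{i\geq0}Q_i(x)t^i
=\sum_{j=0}^{m}A_j(x)\sum_{i\geq0}Q_i(x)\,t^{i+j}.
\]
Next I would re-index the inner sum by setting $k=i+j$, so that $i=k-j$ and the constraint $i\geq0$ becomes $k\geq j$. Since $A(x,t)$ is a polynomial, the sum over $j$ is finite and convergence is not an issue; interchanging the order of summation legitimately gathers all contributions to a fixed power $t^k$.

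The only point requiring care is the upper limit of the resulting inner sum, and this is exactly where the truncation $\min\{m,k\}$ appears. For a fixed $k$, the index $j$ must simultaneously satisfy $j\leq m$ (so that $A_j(x)$ is one of the prescribed coefficients, $A_j(x)=0$ for $j>m$) and $j\leq k$ (so that $Q_{k-j}(x)$ is defined, $k-j\geq0$). Hence $j$ ranges over $0\leq j\leq\min\{m,k\}$, giving
\[
A(x,t)\,g(x,t)=\sum_{k\geq0}\Bigl(\sum_{j=0}^{\min\{m,k\}}A_j(x)Q_{k-j}(x)\Bigr)t^k,
\]
and comparing the coefficient of $t^k$ with $\sum_{k\geq0}P_k(x)t^k$ yields \eqref{equaprop1}. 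I do not expect a genuine obstacle here: the argument is a routine Cauchy-product computation, and the most delicate step is merely the bookkeeping of the two inequalities $j\le m$ and $j\le k$ that together produce the bound $\min\{m,k\}$.
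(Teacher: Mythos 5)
Your argument is correct and is essentially the paper's own proof: both write $f(x,t)=A(x,t)\,g(x,t)$, expand the Cauchy product of the polynomial $A(x,t)$ with the series $\sum_{i\geq0}Q_i(x)t^i$, re-index so that the constraints $j\leq m$ and $j\leq k$ produce the bound $\min\{m,k\}$, and compare coefficients of $t^k$. No further comment is needed.
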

\begin{proof}
\[f\left(x,t\right)=\sum_{j=0}^{m}A_j(x)g\left(x,t\right)t^j\]then
\[f\left(x,t\right)=\sum_{k\geq0}\sum_{j=0}^{m}A_j(x)Q_k\left(x\right)t^{k+j}\]
and
\[f\left(x,t\right)=\sum_{j=0}^{m}\sum_{k\geq j}A_j(x)Q_{k-j}\left(x,t\right)t^{k}\]
which means that
\[f\left(x,t\right)=\sum_{k\geq0}\sum_{j=0}^{\min\left\{m,k\right\}}A_j(x)Q_{k-j}\left(x\right)t^{k}.\]
Furthermore
\[\sum_{k\geq0}P_k(x)t^k=\sum_{k\geq0}\sum_{j=0}^{\min\left\{m,k\right\}}A_j(x)Q_{k-j}\left(x\right)t^{k}.\]
Finally after comparison between the coefficients of $t^k$ in both
sides of the equality we get
\[P_k(x)=\sum_{j=0}^{\min\left\{m,k\right\}}A_j(x)Q_{k-j}\left(x,t\right).\]
\end{proof}

\begin{corollary}
\label{coro2}
\begin{equation}
\label{equacoro2}\chi_m(k)A_k(x)-P_k(x)=\sum_{j=1}^{\min\left\{n,k\right\}}\sum_{l=0}^{\min\left\{m,k-j\right\}}B_j(x)A_l(x)Q_{k-j-l}(x)
\end{equation}
\end{corollary}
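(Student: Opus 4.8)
The plan is to combine the recursive formula from Theorem~\ref{th1} with the explicit convolution formula from Proposition~\ref{prop1}. Looking at Corollary~\ref{coro2}, the left-hand side $\chi_m(k)A_k(x)-P_k(x)$ is exactly what Theorem~\ref{th1} identifies: rearranging \eqref{equath1} gives
\[
\chi_m(k)A_k(x)-P_k(x)=\sum_{j=1}^{\min\{n,k\}}B_j(x)P_{k-j}(x).
\]
So the first step is simply to invoke Theorem~\ref{th1} to rewrite the left-hand side as this single sum over $j$.

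Second, I would substitute the explicit formula for $P_{k-j}(x)$ coming from Proposition~\ref{prop1}. Replacing $k$ by $k-j$ in \eqref{equaprop1} yields
\[
P_{k-j}(x)=\sum_{l=0}^{\min\{m,k-j\}}A_l(x)Q_{k-j-l}(x).
\]
Inserting this into the sum from the first step produces the double sum
\[
\sum_{j=1}^{\min\{n,k\}}B_j(x)\sum_{l=0}^{\min\{m,k-j\}}A_l(x)Q_{k-j-l}(x),
\]
which, after pulling $B_j(x)$ inside the inner sum, is precisely the right-hand side of \eqref{equacoro2}. This completes the identity.

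The only subtlety worth checking is the case $k=0$, where the validity of Theorem~\ref{th1}'s rearranged form must be confirmed: for $k=0$ the left-hand side is $\chi_m(0)A_0(x)-P_0(x)=A_0(x)-A_0(x)=0$, and the right-hand side is an empty sum (since $\min\{n,0\}=0$), so both sides vanish and the identity holds trivially. For $k\geq1$ everything follows directly from the two cited results with no convergence or reindexing issues, since all sums are finite. I expect no real obstacle here; the main care needed is bookkeeping on the upper limits $\min\{n,k\}$ and $\min\{m,k-j\}$ to ensure the index ranges match exactly after substitution.
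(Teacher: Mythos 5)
Your proposal is correct and follows essentially the same route as the paper: both proofs combine the recursion of Theorem~\ref{th1} with the convolution formula of Proposition~\ref{prop1} by substituting $P_{k-j}(x)=\sum_{l=0}^{\min\{m,k-j\}}A_l(x)Q_{k-j-l}(x)$ into the sum over $j$. The only cosmetic difference is that you isolate the $j=0$ term (which equals $P_k(x)$ since $B_0(x)=1$) before substituting, while the paper does so afterwards; your explicit check of the $k=0$ case is a small bonus.
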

\begin{proof}
From \eqref{equath1} Theorem \ref{th1};
\[\chi_m(k)A_k(x)=\sum_{j=0}^{\min\left\{n,k\right\}}B_j(x)P_{k-j}(x)\]
But in means of the identity \eqref{equaprop1} Proposition
\ref{prop1};
\[P_{k-j}(x)=\sum_{l=0}^{\min\left\{m,k-j\right\}}A_l(x)Q_{k-j-l}(x)\]
then
\[\sum_{j=0}^{\min\left\{n,k\right\}}\sum_{l=0}^{\min\left\{m,k-j\right\}}B_j(x)A_l(x)Q_{k-j-l}(x)=\chi_m(k)A_k(x)\]
and the result \eqref{equacoro2} Corollary \ref{coro2} follows.
\end{proof}
Now if $0\leq k\leq m$ we get
\[\sum_{j=0}^{k}\sum_{i=0}^{k-j}B_j(x)A_i(x)Q_{k-j-i}(x)=A_k(x)\]
and
\[\sum_{j=1}^{k}\sum_{i=0}^{k-j}B_j(x)A_i(x)Q_{k-j-i}(x)+\sum_{i=0}^{k}A_i(x)Q_{k-i}(x)=A_k(x)\]
then
\[A_k(x)-P_k(x)=\sum_{j=1}^{k}\sum_{i=0}^{k-j}B_j(x)A_i(x)Q_{k-j-i}(x)\]

\subsection{Proof of Theorem~\ref{th1}}
First we must remember the Cauchy product of a polynomial
$\sum_{k=0}^{n}R_k(x)t^k$ with an entire series
$\sum_{k\geq0}S_k(x)t^k$ to be
\[\left(\sum_{k=0}^{n}R_k(x)t^k\right)\left(\sum_{k\geq0}S_k(x)t^k\right)=\sum_{k\geq0}\left(\sum_{j=0}^{\min\left\{k,n\right\}}R_j(x)S_{k-j}(x)\right)t^k\]
which is an entire series too, for more details about the procedure
we refer to \cite{2}. Now writing
\[f\left(x,t\right)=\sum_{k\geq0}P_k(x)t^k\] then
\[\left(\sum_{k=0}^{n}B_k(x)t^k\right)\left(\sum_{k\geq0}P_k(x)t^k\right)=\sum_{k=0}^{m}A_k(x)t^k\]
and
\[\sum_{k\geq0}\left(\sum_{j=0}^{\min\left\{n,k\right\}}B_j(x)P_{k-j}(x)\right)t^k=\sum_{k=0}^{m}A_k(x)t^k,\]
furthermore
\[\sum_{k\geq0}\left(\sum_{j=0}^{\min\left\{n,k\right\}}B_j(x)P_{k-j}(x)\right)t^k=\sum_{k\geq0}\chi_m(k)A_k(x)t^k.\]
After identification we obtain
\[\chi_m(k)A_k(x)=\sum_{j=0}^{\min\left\{n,k\right\}}B_j(x)P_{k-j}(x),\ k\geq0\]
and the recursive formula \eqref{equath1} in
Theorem~\ref{th1} follows.\\

This identity states the recursive formula of a large families of
polynomials. Including the polynomials generated by functions of the
form
\[\theta\left(x,t\right)=\frac{\sum_{j=0}^{m}A_j(x)t^j}{\left(\sum_{j=0}^{n}B_j(x)t^j\right)^h}\]
where $B_0(x)=1$ and $h>1$ a positive integer. The reason is that
$\left(\sum_{j=0}^{n}B_j(x)t^j\right)^h$ is only polynomial in
$\mathbb{Z}[x,t]$, it takes the following form
\[\left(\sum_{j=0}^{n}B_j(x)t^j\right)^h=\sum_{j=0}^{hn}D_j(x)t^j.\]
with $D_0(x)=1.$\\

The following table gives a few families of polynomials obeying the
general recursive formula \eqref{equath1} Theorem \ref{th1}.
\begin{table}[htb]
 \caption{few families $P_k(x)$ of polynomials}\label{table1}
  \vglue2mm
\centering
 {
 \begin{tabular}{|l|l|l|l|}
  \hline
  $A(x,t)$ & $B(x,t)$ & Polynomial & \multicolumn{1}{|l|}{Recursive formula}\\\hline
  $t$ & $1-xt-t^2$ & Fibonacci & $F_k(x)-xF_{k-1}(x)-F_{k-2}(x)=0, F_0(x)=0, F_1(x)=1$ \\
  $1$ & $1-t+xt^2$ & Catalan \cite{3} & $C_k(x)-C_{k-1}(x)+xC_{k-2}(x)=0, C_0(x)=C_1(x)=1$ \\
  $t$ & $1-xt-t^m$ & G. Fibonacci \cite{1}& $U_{n,m}(x)-xU_{n-1,m}(x)-U_{n-m,m}(x)=0$, $n\geq m$ \\
  $t$ & $1-t-xt^2$ & Jacobsthal & $J_k(x)-J_{k-1}-xJ_{k-2}(x)=0, J_1(x)=J_2(x)=1$ \\
  $1$ & $1-pxt-qt^2$ &Horadam \cite{1}&$A_k(x)-pxA_{k-1}-qA_{k-2}(x)=0, A_0(x)=0, A_1(x)=1$\\
  $1+qt^2$ & $1-pxt-qt^2$ &Horadam \cite{1}& $B_k(x)-pxB_{k-1}-qB_{k-2}(x)=0, B_0(x)=2, B_1(x)=x$\\
  $1$ & $1-2xt-t^2$ &Pell \cite{1}& $P_k(x)-2xP_{k-1}-P_{k-2}(x)=0, P_0(x)=0, P_1(x)=1$\\
  $2x+2t$ & $1-2xt-t^2$ &Pell-Lucas \cite{1}& $Q_k(x)-2xQ_{k-1}-Q_{k-2}(x)=0, Q_0(x)=2,  Q_1(x)=2x$\\
  $2-xt$ & $1-xt-t^m$ &G. Lucas \cite{1}& $V_{n,m}(x)-xV_{n-1,m}(x)-V_{n-m,m}(x)=0$, $n\geq m$\\ \hline
 \end{tabular}}
\end{table}

%%%%%%%%%%%%%%%%%%%%%%%%%%%%%%%%%%%%%%%%%%%%%%%%%%%%%%%%%%%%%%%%%%%%%%%%%%%%%%%%%%%%%%%%%%%%%%%%%%%%%%%%%%%%%%%%%%%%%%%%%%
\section{Application to generalized Catalan and Fibonacci polynomials}

In the literature a large families of polynomials obey to the
general recursive formula given in Theorem \ref{th1}. In this
section, we revisit the works \cite{1,2} and get, with a new method
the recursive formulas satisfied by generalized Catalan polynomials
and Fibonacci polynomials in the case $\lambda=1$ for the first and
$h=1$ for the second.\\

Firstly we began by generalized Catalan polynomials, these generated
by the function
\[\frac{1+A(x)t}{1-mt+xt^m}=\sum_{k\geq0}\mathcal{P}^{1,A}_{k,m}(x)t^k\]
According to Theorem \ref{th1}; the following proposition states
exactly the same recursive formula of the family
$\mathcal{P}^{1,A}_{k,m}(x)$ as in Corollary 2.1 \cite{3} p. 166 by
taking $\lambda=1$.
\begin{proposition}
\label{prop2} The family $\left\{\mathcal{P}^{1,A}_{k,m}(x),\
k\geq0\right\}$ is defined by \[\mathcal{P}^{1,A}_{0,m}(x)=1,
\mathcal{P}^{1,A}_{1,m}(x)=A(x)+m,\]
\begin{equation}
\label{equa1prop2}\mathcal{P}^{1,A}_{k,m}(x)=m\mathcal{P}^{1,A}_{k-1,m}(x),\
2\leq k< m,
\end{equation}
and
\begin{equation}
\label{equa2prop2}\mathcal{P}^{1,A}_{k,m}(x)=m\mathcal{P}^{1,A}_{k-1,m}(x)-x\mathcal{P}^{1,A}_{k-m,m}(x),\
k\geq m.
\end{equation}
\end{proposition}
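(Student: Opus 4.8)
The plan is to read off the coefficient sequences $\{A_j\}$ and $\{B_l\}$ directly from the given generating function and feed them into the recursion \eqref{equath1} of Theorem~\ref{th1}. Writing the numerator $1+A(x)t$ in the form $\sum_j A_j(x)t^j$ gives $A_0(x)=1$, $A_1(x)=A(x)$, and $A_j=0$ for $j\ge2$; thus the numerator has degree $1$, so the relevant characteristic function in \eqref{equath1} is $\chi_1$, the indicator of $\{0,1\}$. Writing the denominator $1-mt+xt^m$ as $\sum_l B_l(x)t^l$ gives $B_0(x)=1$ (so that Theorem~\ref{th1} applies), $B_1(x)=-m$, $B_m(x)=x$, and $B_l(x)=0$ for every other index $l\ge1$; in particular the denominator has degree $m$, so that in the notation of Theorem~\ref{th1} one has $n=m$.

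First I would dispatch the base cases. Formula \eqref{equath1} with $k=0$ gives $\mathcal{P}^{1,A}_{0,m}(x)=A_0(x)=1$. For $k=1$ only the term $j=1$ occurs and $\chi_1(1)=1$, so \eqref{equath1} reduces to $A_1(x)-B_1(x)\mathcal{P}^{1,A}_{0,m}(x)=A(x)+m$, which is the stated value of $\mathcal{P}^{1,A}_{1,m}(x)$.

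Next, for $2\le k<m$ I would note that $\chi_1(k)=0$ and $\min\{n,k\}=\min\{m,k\}=k$, so the sum in \eqref{equath1} runs over $1\le j\le k$. Since $k<m$, every $B_j$ in this range vanishes except $B_1(x)=-m$, and the recursion collapses to $\mathcal{P}^{1,A}_{k,m}(x)=-B_1(x)\mathcal{P}^{1,A}_{k-1,m}(x)=m\,\mathcal{P}^{1,A}_{k-1,m}(x)$, proving \eqref{equa1prop2}. Finally, for $k\ge m$ one has $\chi_1(k)=0$ and $\min\{m,k\}=m$, so the sum runs over $1\le j\le m$; now exactly two terms survive, namely those with $B_1(x)=-m$ and $B_m(x)=x$, yielding $\mathcal{P}^{1,A}_{k,m}(x)=m\,\mathcal{P}^{1,A}_{k-1,m}(x)-x\,\mathcal{P}^{1,A}_{k-m,m}(x)$, which is \eqref{equa2prop2}.

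The computation itself is entirely mechanical, so the only real subtlety---and the thing I would flag to the reader---is the clash of notation between the parameter $m$ appearing in the denominator of the proposition and the symbol $m$ used in Theorem~\ref{th1} for the degree of the numerator. Keeping these distinct (here the numerator degree is $1$, while the denominator degree $n$ equals the parameter $m$, assumed $\ge2$) is what makes each case drop out correctly; once that bookkeeping is fixed, the vanishing of the intermediate coefficients $B_2,\dots,B_{m-1}$ does all the work.
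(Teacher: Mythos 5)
Your proof is correct and follows essentially the same route as the paper: identify $A_0=1$, $A_1=A(x)$, $B_1(x)=-m$, $B_m(x)=x$ with all other coefficients zero, and substitute into the recursion \eqref{equath1} of Theorem~\ref{th1}, splitting into the cases $k<m$ and $k\ge m$. Your remark about the clash between the parameter $m$ of the proposition and the symbol $m$ denoting the numerator degree in Theorem~\ref{th1} is a worthwhile clarification that the paper glosses over, but it does not change the argument.
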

\begin{proof}. In means of the identity \eqref{equath1} Theorem \ref{th1} we deduce that
\[\mathcal{P}^{1,A}_{0,m}(x)=1, \mathcal{P}^{1,A}_{1,m}(x)=A(x)+m\]
and
\[\mathcal{P}^{1,A}_{k,m}(x)=-\sum_{j=1}^{\min\left\{m,k\right\}}B_j(x)\mathcal{P}^{1,A}_{k-j,m}(x),\ k\geq2.\]
with $B_1(x)=-m, B_m(x)=x$ and the others are zero. Explicitly
\[\mathcal{P}^{1,A}_{k,m}(x)=m\mathcal{P}^{1,A}_{k-1,m}(x),\  2\leq k< m\]
and
\[\mathcal{P}^{1,A}_{k,m}(x)=m\mathcal{P}^{1,A}_{k-1,m}(x)-x\mathcal{P}^{1,A}_{k-m,m}(x),\  k\geq m\]
\end{proof}

Without lost generality the identity \eqref{equath1} Theorem
\ref{th1} can be adapted to polynomials of two variables in the
following way
\[f(x,y,t)=\frac{A(x,y,t)}{B(x,y,t)}\] with
$A(x,y,t)=\sum_{j=0}^{n}A_j(x,y)t^j$ and
$B(x,y,t)=\sum_{j=0}^{m}B_j(x,y)t^j.$ With the same demarche we
conclude that $f(x,y,t)$ is a generating function if and only if
$B_0(x,y)=1.$ In this case \[f(x,y,t)=\sum_{k\geq0}P_k(x,y)t^k\] and
the corresponding recursive formula is
\begin{equation}
\label{bivariate}P_{k}(x,y)=\chi_m(k)A_k(x,y)-\sum_{j=1}^{\min\left\{n,k\right\}}B_j(x,y)P_{k-j}(x,y)
\end{equation}

Secondly, the function
\[f(x,y,t)=\frac{1+A(x,y)t}{1-x^kt-y^mt^{m+n}}=\sum_{k\geq0}\mathcal{G}^{1,A}_{\nu}\left(x,y,k,m,n\right)t^{\nu}\] generates the
generalized two variables Fibonacci polynomials
$\mathcal{G}^{1,A}_{\nu}\left(x,y,k,m,n\right)$ and the same result
as in Proposition 4.2 \cite{2} is deduced where
\[\mathcal{G}^{0,A}_{1}\left(x,y,k,m,n\right)=1, \mathcal{G}^{1,A}_{1}\left(x,y,k,m,n\right)=A(x,y)+x^k,\]
\begin{equation*}
\mathcal{G}^{1,A}_{\nu}\left(x,y,k,m,n\right)=x^k\mathcal{G}^{1,A}_{\nu-1}\left(x,y,k,m,n\right),\
2\leq \nu<n+m
\end{equation*}
and
\begin{equation*}
\mathcal{G}^{1,A}_{\nu}\left(x,y,k,m,n\right)=x^k\mathcal{G}^{1,A}_{\nu-1}\left(x,y,k,m,n\right)+y^m\mathcal{G}^{1,A}_{\nu-n-m}\left(x,y,k,m,n\right),\
\nu\geq n+m.
\end{equation*}
These two kinds of polynomials admit a natural generalization to the
forms
\[\frac{\sum_{j=0}^{m}A_j(x)t^j}{\left(1-mt+xt^m\right)^h}=\sum_{k\geq0}\mathcal{P}^{h,A}_{k,m}(x)t^k\]
and
\[\frac{\sum_{j=0}^{m}A_j(x)t^j}{\left(1-x^kt-y^mt^{m+n}\right)^h}=\sum_{k\geq0}\mathcal{G}^{h,A}_{k}\left(x,y,k,m,n\right)t^k.\]
The recursive formula satisfied by these polynomials is left as an
exercise.
\section{Conclusion}
\label{Sec:3} This method is efficient, it gives directly the
recursive formula of infinitely many families of polynomials
generated by rational functions. An open question is: can we found a
general explicit formula for these polynomials?

\end{document}